\def\Q{\mathbb{Q}}
\def\F{\mathbb{F}}
\def\T{{\cal{T}}}
\def\cl{{\rm {span}}}
\def\cli{{\rm {cl}^*}}
\newtheorem{theorem}{\bf Theorem}
\newtheorem{claim}[theorem]{\bf Claim}
\newtheorem{corollary}[theorem]{\bf Corollary}
\newtheorem{definition}{\bf Definition}
\newcommand{\supp}{{\rm supp}}
\newcommand{\rank}{{\rm rank}}
\newcommand{\ignore} [1] {} %
\title{Large Simple $d$-Cycles in Simplicial Complexes}
\author{Roy Meshulam \thanks{Department of Mathematics,
Technion, Haifa 32000, Israel. e-mail:
meshulam@technion.ac.il~. Supported by ISF grant 326/16.}
\and Ilan Newman \thanks{Department of Computer Science, University
of
Haifa, Haifa, Israel. E-mail: {\tt ilan@cs.haifa.ac.il}. 
This Research
was supported by The Israel Science Foundation, grant number
497/17} \and Yuri Rabinovich\thanks{Department of Computer Science, University
of
Haifa, Haifa, Israel. E-mail: {\tt yuri@cs.haifa.ac.il}
}
}
\begin{document}
\maketitle
\begin{abstract}
  We show that the size of the largest simple $d$-cycle $C$ in a
  simplicial $d$-complex $K$ is at least a square root of $K$'s
  density. This generalizes a well-known classical result of Erd\H{o}s
  and Gallai~\cite{EG59} for graphs. We use methods from matroid
  theory, applied to combinatorial simplicial complexes.
\end{abstract}

\section{Introduction}
\label{s:intro}

Let $G=(V,E)$ be a finite graph. A classical result of Erd\H{o}s and Gallai~\cite{EG59} asserts that
if $|E| > 2k(|V|-1)$, then $G$ contains a simple cycle of length \;$> k$.
In this paper we study the analogous question for higher dimensional simplicial complexes.

A $d$-cycle in a $d$-dimensional simplicial complex is a set of $d$-faces with coefficients in a ring $R$,
whose {\em boundary} is $0$.\footnote{See Section~\ref{sec:defs} for a precise definition.}
A $d$-cycle is {\em simple} if it contains (in a set theoretic sense, i.e., ignoring the coefficients) no other nontrivial
$d$-cycles. While in graphs the choice of the ring $R$ is immaterial as far as the structure of the simple cycles is concerned, in higher dimensions it is of importance. In this paper we assume that $R$ is an arbitrary field $\F$. One advantage of working over a field is that it introduces a matroidal structure on the set of $d$-faces of a complex, where a subset of
$d$-faces is {\em independent} if it supports no nontrivial $d$-cycles.
Our results will exploit the combinatorial structure of $d$-cycles, and will not dependent on the choice of $\F$.

Let $c(G)$ denote the length of the maximum simple cycle in $G$.
The graph-theoretic lower bound of Erd\H{o}s and Gallai~\cite{EG59} can be interpreted it in two somewhat
different ways. The first interpretation is that $c(G)$ is linear in $D=2|E|/|V|$, the average degree of $G$.
The second interpretation is that $c(G)$ is linear in $|E|/\rank(G)$,
where the rank of $G$  is the
size of a maximum acyclic subset of edges in it. The latter interpretation is more suitable for a generalization,
and we shall pursue it for the most part of the paper; it will also be used to obtain a generalization of the former interpretation.

Observe that the results in~\cite{EG59} further implies that $c(G)$ is linear not only in $|E|/\rank(G)$, but also in
$\gamma(G) = \max_{G' \subseteq G} \lceil |E(G')|/\rank(G') \rceil$, where the maximum is taken over all
subgraphs of $G$.
This is a standard graph theoretic paramater. E.g., by Nash-Williams Theorem~(see, e.g., \cite{Diestel}), the minimum number
of subforests of $G$ required to cover $E(G)$  is precisely $\gamma(G)$. Moreover, it generalizes to matroids, and hence to simplicial complexes over a field. 
A classical result of Rado (see e.g., \cite{Oxley11}) asserts that the minimum number of independent sets in a
matroid $M$ required to cover $M$ is ~$\gamma(M)= \max_{A \subset E(M) }\lceil {|A|}/{\rank(A)} \rceil.$

In this paper we develop a general matroid theoretical framework allowing to obtain lower bounds on $c(M)$,
the size of the largest simple cycle of $M$, in terms of $\gamma(M)$. We then adapt and augment this
technique to obtain similar bounds for simplicial complexes. One representative result is:
\vspace{0.2cm}

{\bf Theorem} {\em Let $K$ be a pure simplicial $d$-complex containing
  $d$-cycles. Let $f_\ell(K)$ be the number of $\ell$-simplices in $K$. Then, $K$ contains
  a simple $d$-cycle of size
  $~\geq\; 1/ \sqrt{d}\cdot \sqrt{ { {f_d(K)} / {f_{d-1}(K)}}} -1 $.}
\vspace{0.2cm}

This paper contributes to the rapidly evolving study of the combinatorics of simplicial complexes in the context of their homological and homotopical properties. Let us mention, e.g., the paper~\cite{DGK} dealing with {\em small}
(simple) cycle in dense simplicial complexes, addressing a similar (actually, only a similar-looking) problem.
\section{Standard Notions Pertaining to Simplicial Complexes and to Matroids}
\label{sec:defs}
\noindent
{\bf Simplicial complexes:}~
Let $K$ be a finite $d$-dimensional simplicial complex on the vertex set $V$ and let $ \F$ be a field. Let $\prec$ be a fixed linear order on $V$. Orient each simplex in $K$ according to this order, i.e.,
$\sigma=[v_0,\ldots,v_i]$ if $v_0 \prec \cdots \prec v_i$.
Let $K^{(i)}$ denote the set of oriented $i$-dimensional faces of $K$, and let
$f_i(K)=|K^{(i})|$. Let $C_i(K; \F)$ be the space of $i$-chains of $K$, where a chain is a free
sum (i.e., a union) of weighted oriented $i$-simplices in $K^{(i)}$.

The {\em boundary} of $d$-simplex $\sigma = [v_0,\ldots,v_d]$ is ~ $\partial_{d}(\sigma) = \sum_{j=0}^d (-1)^j [v_0,\cdots,\tilde{v}_j,\cdots,v_d]$ where $\tilde{v}_j$ stands for an
omitted $v_j$. This linearly extends to the {\em boundary map}
$\partial_d:C_d(K; \F) \rightarrow  C_{d-1}(K; \F)$.

As usual, $Z_d(K; \F)=\ker \partial _d$ is the linear space over $\F$ of $d$-cycles in $K$,
and $B_{d-1}(K; \F)=\text{Im}\,\partial_d$ is the linear space over $\F$ of $(d-1)$-boundaries in $K$.

Let $C=\sum_{i=1}^m \alpha_i \sigma_i \in Z_d(X; \F)$ be a $d$-cycle in $K$, where the
$\alpha_i \neq 0$, and the $\sigma_i$'s are distinct $d$-simplices.
A $d$-cycle $C$ is \emph{simple} if the set
$\{\partial_d\sigma_1,\ldots,\partial_d\sigma_{i-1},\partial_d\sigma_{i+1},\ldots,\partial_d\sigma_m\}$ is linearly independent over $ \F$ for each $1 \leq i \leq m$.
Equivalently, $C$ is simple if its {\em support} does strictly contain the support of any other
non-trivial $d$-cycle in $K$. The {\it support} of $C$ is
$\supp(C)=\{\sigma_1,\ldots,\sigma_m\}$.
\\ \\
{\bf Matroids:} We only list here some of the more relevant notions from the matroid theory.
For more details see Oxley's book \cite{Oxley11}.  Let $M  = (E,
\mathcal{I})$ be a matroid on $E$, with $\mathcal{I}$ its collection
of  independent sets .
The {\em rank} of $A \subseteq E$ is the size of the maximum independent set in it.
The span (also called the closure) of $A$ in $M$, denoted by
$A \subseteq \cl(A) \subseteq E$, is the maximal set containing $A$ such that $\rank(A) = \rank(\cl(A))$.
A {\em circuit} is a minimally dependent set.

A matroid $M$ is called {\em loopless} if no $e \in E(M)$ forms a circuit. It has no {\em double edges} if no
$e,f \in E(M)$ form a circuit. A loopless matroid without double edges is called {\em simple}.

 A matroid $M$ is called {\em connected} if it is not a direct sum of its
{\em submatroids}, where a submatroid is a matroid on a subset of $E(M)$ with the rank function inherited from
$M$. Equivalently, define a binary relation on $E(M)$ where $e \sim f$ if there exists a circuit containing both.
This turns out to be an equivalence relation. The equivalence classes of this relation are called the {\em connected
components} of $M$. The matroid $M$ if connected if there is only one such component.

{\em Minors:} A minor of a matroid $M$ is a matroid obtained from $M$
by series of {\em deletions} and {\em contractions}. Deletion of $e \in E(M)$ is the matroid $M \setminus e = (E\setminus \{e\},
\mathcal{I'})$ where $\mathcal{I'} = \{I \subseteq E \setminus \{e\}|~
I  \in \mathcal{I} \}$.  Contraction of $A$ is the matroid $M / e = (E
\setminus \{e\}, \mathcal{I'})$ where $\mathcal{I'} = \{ I \subseteq E
\setminus \{e\}~ | ~ I \cup \{e\} \in \mathcal{I} \}$.

The elements of a {\em linear} matroid over $\F$ are vectors over $\F$, and the rank function is the usual linear-algebraic rank. A matroid is called $\F$-representable if it is isomorphic
to a linear matroid over $\F$. The elements of a {\em simplectic} matroids are
the $d$-simplices of a simplicial complex  $K$, where a set of such simplices is dependent
whenever it contains a support of a nontrivial $d$-cycle of over $\F$. Simplectic matroids are linear.

\section{Approaches and Results: Matroids}
\label{sec:main}
\subsection{Using Forbidden Minors: $\F_q$-representable Matroids}
Observe that a class of graphs $G$ with $c(G) < k$ is closed under minors. This fact could be employed to
obtain a weaker version of~\cite{EG59} by using classical results (see, e.g.,~\cite{Thomason}
about the density of graphs lacking a size-$k$ minor).

As with graphs, the class of matroids $M$ with $c(M) < k$ lacks the graphic minor $M(C_k)$, i.e., the matroid
associated with the graph $C_k$, and hence it lack also $M(K_k)$, i.e., the matroid associated with the graph $K_k$.
A hard result of Geelen and Whittle~\cite{GW03} (see also~\cite{Geelen11}) asserts that
\begin{theorem}{\bf\cite{GW03}} Let $M$ be a simple $\F_q$-representable matroid (equivalently, a linear matroid over a field of size $q$) lacking $M(K_k)$. Then,
\[
\gamma(M) ~<~ q^{q^{3k}}~.
\]
\end{theorem}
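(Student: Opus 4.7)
The plan is to follow the density-increment strategy of Geelen and Whittle, organized into three stages, with the caveat that this is a deep structural theorem whose full proof draws on substantial apparatus from the matroid minors program.

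First I would invoke Kung's bound, which states that a simple $\F_q$-representable matroid of rank $r$ has at most $(q^r-1)/(q-1)$ elements, with equality exactly when $M \cong PG(r-1,q)$. This anchors $|E(M)|$ to $\rank(M)$ in the unrestricted setting, and identifies projective geometries as the canonical density extremizers. It also provides a trivial baseline: if $\rank(A)$ is small then $|A|$ is already bounded as a function of $\rank(A)$ and $q$, so any violation of the $\gamma$-bound must be witnessed by a submatroid whose rank is not too small.

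The second and central step is a density-versus-structure dichotomy: if $\gamma(M) \geq q^{q^{3k}}$, then $M$ contains $PG(n-1,q)$ as a minor for $n$ sufficiently large (specifically, $n \geq k-1$ would suffice for stage three). This is the principal obstacle and the heart of the Geelen--Whittle argument. The idea is that although Kung's theorem controls only $|E|$ in terms of $\rank$, a global excess of density must be localized in some submatroid of moderate rank and extremal density, which by Kung must be close to a projective geometry; iterating contractions that quotient out the extremal slab then produces $PG(n-1,q)$ as a minor. Making this rigorous requires the matroid analogue of the Robertson--Seymour excluded-grid theorem, phrased via branch-width and tangles, which I would not attempt to reprove but rather cite from the Geelen--Whittle sequence of papers referenced as \cite{GW03, Geelen11}.

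The third stage is routine: once a sufficiently large $PG(n-1,q)$ appears as a minor, it contains $M(K_k)$ as a further minor. Indeed, $M(K_k)$ is graphic and hence $\F_q$-representable for every $q$, and has rank $k-1$, so it embeds as a restriction of $PG(k-2,q)$ because $\binom{k}{2} \leq (q^{k-1}-1)/(q-1)$; any restriction of a projective geometry of larger rank is then obtained by deletions alone. Contraposing the implication chain -- exclude $M(K_k)$ $\Rightarrow$ exclude $PG(n-1,q)$ $\Rightarrow$ $\gamma(M)$ bounded -- yields the claim. The sole hard step is the density-to-geometry implication; everything else is bookkeeping atop Kung's theorem and the straightforward embedding of $M(K_k)$ into a projective geometry.
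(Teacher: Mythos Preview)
The paper does not prove this theorem; it is quoted from \cite{GW03} as a black box and immediately used to derive Corollary~\ref{cor:q-rep}. There is no ``paper's own proof'' to compare against.

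Your outline is a faithful high-level summary of the Geelen--Whittle strategy: Kung's bound to control size by rank, a density-increment argument forcing a large projective-geometry minor, and then the easy embedding of $M(K_k)$ into $PG(k-2,q)$. But, as you explicitly acknowledge, stage two is the entire substance of the theorem, and you propose to handle it by citing \cite{GW03,Geelen11} --- which is precisely what the paper does. So your treatment and the paper's coincide: both defer to the literature. If your intent was to supply an independent argument, the gap is that the density-to-$PG$-minor implication remains unproved; you cannot discharge it by citing the very result under discussion. Stages one and three are correct and routine, but they are the bookkeeping, not the theorem.
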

\begin{corollary}
\label{cor:q-rep}
For $M$ as above, ~$c(M) \; >~ {1\over 3}\log_q\log_q \gamma(M)\,$.
\end{corollary}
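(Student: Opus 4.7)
The plan is to derive the corollary from the Geelen--Whittle theorem (Theorem~1) via a straightforward contrapositive, using standard matroid facts to translate between large circuits in $M$ and the forbidden minor $M(K_k)$.

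First, I would record the following minor-theoretic observation: if $M$ contains $M(K_k)$ as a minor, then $c(M) \geq k$. This is because $M(K_k)$ contains a Hamilton cycle, giving a circuit of size $k$, and any circuit of a minor of $M$ lifts to a circuit of $M$ of at least the same size. Explicitly, if $N = (M/A) \setminus B$ and $C$ is a circuit of $N$, then $C$ remains a circuit of $M/A$, which lifts to a circuit of the form $C \cup A'$ in $M$ for some $A' \subseteq A$, of size $|C|+|A'|\geq |C|$. Applied with $|C|=k$, this yields $c(M)\geq k$ whenever $M(K_k)$ is a minor of $M$.

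Second, I would contrapose Theorem~1: if $\gamma(M) \geq q^{q^{3k}}$, then $M$ must contain $M(K_k)$ as a minor, and hence $c(M) \geq k$ by the previous step. Choosing $k$ to be the largest integer satisfying $q^{q^{3k}} \leq \gamma(M)$, namely $k = \lfloor \tfrac{1}{3}\log_q\log_q \gamma(M)\rfloor$, yields the claimed lower bound on $c(M)$; the integer truncation is absorbed by the strict inequality in the statement.

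I do not expect any real obstacle beyond this mechanical reduction: Theorem~1 does all the heavy lifting. The only items that require a moment's care are the minor-to-circuit lifting above, which is a classical matroid-theoretic fact, and a brief attention to integer rounding when inverting the doubly-exponential expression $q^{q^{3k}}$.
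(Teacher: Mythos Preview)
Your approach is correct and is exactly the argument the paper has in mind. The paper does not write out a proof of the corollary; it is meant to follow immediately from the sentence preceding Theorem~1 (the class $\{M:c(M)<k\}$ lacks $M(C_k)$ and hence $M(K_k)$ as a minor) together with Theorem~1, and your proposal just spells out that contrapositive. The only cosmetic difference is that the paper routes through $M(C_k)$ before $M(K_k)$, while you go directly via the Hamilton cycle in $K_k$; these are equivalent. (Your remark that the integer truncation is ``absorbed by the strict inequality'' is a bit optimistic---the stated constant $\tfrac{1}{3}$ is only correct up to an additive $O(1)$---but the paper is equally casual about this.)
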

While this does establish a weak lower bound on $c(M)$ in terms of $\gamma(M)$ for, say, binary matroids,
the bound can be considerably strengthened (see below). For infinite fields like $\Q$ it yields nothing.
\subsection{Using Seymour's Lemma: General Matroids}
The following lemma by Seymour (Theorem 3.4 in~\cite{DOO95}) will be used.
\begin{theorem}
\label{th:seymour}
Let $M$ be a connected loopless matroid, $|E(M)|>1$,  and let $C$ be a maximum size circuit in $M$.
Then, size of the maximum circuit in the induced matroid  $M/C$ is strictly less than $|C|$.
\end{theorem}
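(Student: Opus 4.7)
The plan is to argue by contradiction and exhibit a circuit of $M$ strictly larger than $C$. Suppose $M/C$ has a circuit $D$ with $|D| \geq |C| = k$. Fixing a basis $B \subseteq C$ of $M|_C$, so $|B| = k-1$ and $C \setminus B = \{c_0\}$, the element $c_0$ is a loop of $M/B$, and $M/C = (M/B) \setminus \{c_0\}$. Hence $D$ is a circuit of $M/B$, meaning $D \cup B$ is dependent in $M$ and, by the minimality of $D$ as a circuit of $M/B$, every circuit $C^* \subseteq D \cup B$ of $M$ satisfies $D \subseteq C^*$. Thus $|C^*| = |D| + |C^* \cap B|$, while the maximality of $C$ forces $|C^*| \leq k \leq |D|$. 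This yields $C^* = D$ and $|D| = k$, so $D$ itself is a circuit of $M$ of size $k$, disjoint from $C$.

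Next I would analyse $M|_{C \cup D}$. Since $C$ and $D$ are disjoint circuits of rank $k-1$ each, one checks that for any bases $B_C, B_D$ of $M|_C, M|_D$ respectively, the union $B_C \cup B_D$ is independent in $M$ and has size $2(k-1) = \rank(C \cup D)$. Hence every basis of $M|_{C \cup D}$ is of this form, and the only circuits of $M|_{C \cup D}$ are $C$ and $D$ themselves. In particular no circuit of $M$ meeting both $C$ and $D$ can lie inside $C \cup D$. Connectedness of $M$ supplies, for any $a \in C$ and $b \in D$, a circuit $L$ of $M$ with $a, b \in L$, and the preceding remark then forces some $f \in L \setminus (C \cup D)$.

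The last step, which I expect to be the main obstacle, is to use $C$, $D$, and the connecting circuit $L$ to build a circuit of $M$ of size $> k$. One promising route is via a fundamental-circuit argument: after choosing $L$ of minimal size (so that as many of its elements as possible lie in $C \cup D$), one has $f \in \cl(C \cup D) = \cl((C \setminus \{a\}) \cup (D \setminus \{b\}))$, so the set $(C \setminus \{a\}) \cup (D \setminus \{b\}) \cup \{f\}$ has size $2k-1$ and rank $2k-2$, and therefore contains a unique circuit through $f$. The target would be to show that this circuit exceeds $k$ elements, contradicting maximality of $|C|$. An alternative route is iterated strong circuit exchange: first replace $L$ by a circuit $L_1 \subseteq (L \cup C) \setminus \{a\}$ still containing $b$, then replace $L_1$ by a circuit $L_2 \subseteq (L_1 \cup D) \setminus \{b\}$ still containing some element of $C$. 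The delicate combinatorial point is that strong circuit exchange only yields upper bounds of the form $|L_1| \leq |L| + k - |L \cap C| - 1$; controlling the sizes requires a careful minimality choice for $L$ (for example minimising $|L \cap C| + |L \cap D|$) together with an argument that prevents the successive circuits $L_i$ from shrinking below $k+1$ elements.
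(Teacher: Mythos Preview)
The paper does not prove this statement at all: Theorem~\ref{th:seymour} is quoted as Seymour's lemma (Theorem~3.4 in \cite{DOO95}) and used as a black box. So there is no ``paper's own proof'' to compare against; I will instead comment on the validity of your argument.

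Your reduction (the first two paragraphs) is correct and clean. From a circuit $D$ of $M/C$ with $|D|\ge|C|=k$ you correctly deduce that $D$ is itself a circuit of $M$, disjoint from $C$, with $|D|=k$; and your analysis of $M|_{C\cup D}$ is right --- it is the direct sum of the two circuits, so no circuit of $M$ contained in $C\cup D$ can meet both $C$ and $D$. Invoking connectedness to get a circuit $L$ through $a\in C$ and $b\in D$ with some $f\in L\setminus(C\cup D)$ is also fine.

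The genuine gap is exactly where you flag it, and neither of your two routes closes it as written. In Route~1 the claim ``$f\in\cl(C\cup D)$'' is unjustified: minimising $|L|$ (or even minimising $|L\setminus(C\cup D)|$) does not by itself force $L\setminus(C\cup D)$ to be a single element, which is what you would need for $f\in\cl(L\setminus\{f\})\subseteq\cl(C\cup D)$. Moreover, even granting $f\in\cl(C\cup D)$, the fundamental circuit $C_f$ of $f$ with respect to the basis $(C\setminus\{a\})\cup(D\setminus\{b\})$ could lie entirely in $(C\setminus\{a\})\cup\{f\}$ (this happens whenever $f\in\cl(C\setminus\{a\})$), giving only $|C_f|\le k$ --- so the ``target'' conclusion does not follow without further argument ruling out $f\in\cl(C)$ and $f\in\cl(D)$ separately. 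Route~2 has the problem you yourself identify: strong circuit exchange gives no lower bound on the size of the new circuit, and the sketch does not explain how any minimality hypothesis on $L$ prevents the successive $L_i$ from collapsing. So at present the proposal establishes the setup but does not contain the key combinatorial step; the argument needs a genuinely new idea (for instance a structural lemma relating circuits of $M$ through $C$ and $D$ to circuits of $M/C$, along the lines of what Seymour actually proves in \cite{DOO95}) rather than just a refinement of the two routes you outline.
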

For matroids with loops, $\gamma()$ is not interesting as formally
$\gamma(M)=\infty$. For loopless matroids, as far as the relations between $c(M)$ and $\gamma(M)$ goes, we may  restrict our
attention to connected matroids. Indeed, assume that $M$ has connected
components $\{M_i\}_1^{\ell}$. Then, on one hand, a circuit of $M_i$ is also a circuit of $M$, and so
$c(M) \geq c(M_i)$. In fact, $c(M) =\max_i c(M_i)$, since any circuit of $M$ lies in some connected component.  On the other hand,  it holds that $\gamma(M) = \max_i \gamma(M_i)$.
The direction "$\geq$" is obviously true. For the direction "$\leq$",
let $K \subseteq M$ be the subset of elements on which $\gamma(M)$ is achieved, and let
$K_i = K \cap E(M_i)$. Then,
\[
\gamma(M) ~=~ {{|K|} \over {\rank(K)}} ~=~ {{\sum_j |K_j|}\over{\sum_j \rank(K_j)}} ~\leq~
\max_j {{|K_j|}\over{ \rank(K_j)}} ~\leq~\gamma(M_j)~\leq~\max_i\,\gamma(M_i)~.
\]
Given a connected loopless matroid $M$, $|E(M)|>1$, we define the following decomposition process of $M$,
described by a tree $\T_M\,$:
\begin{definition}
\label{def:T}
Each vertex $x$ of $\T_M$ will have an associated pair $(M_x,C_x)$,
where $M_x$ is a connected loopless  minor of $M$, and $C_x$ is
a maximum size circuit of $M_x$.  

The matroid associated with the root vertex is the original $M$.

The children of the vertex $x$ in $\T_M$ correspond to the connected components of $M_x/C_x$ after removal
of the loops. If there are no such non-empty components (that is, $rank(M_x
/ C_x)=0$), $x$ is a leaf of  $\T_M$.
\end{definition}
The following claim establishes some basic properties of $\T_M$.
\begin{claim}
\label{cl:T_M}
$\mbox{}$

* ~ $\T_M$ is well defined, given an arbitrary maximum size circuit
$C_x$ at each vertex.

\vspace{0.2cm}
* ~ For any $x,y$, where $y$ is the parent of $x$ in $\T_M$, it holds that $|C_y| > |C_x|$. \\
$\mbox{}$  \hspace{0.91cm} Consequently, the depth of $\T_M$\; is $\;<~ c(M) -1$\,.

\vspace{0.2cm}
* ~ $\sum_{x\in \T_M} (|C_x| - 1) ~=~ \rank(M)$.
\end{claim}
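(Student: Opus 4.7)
The plan is to handle the three items in the order given, exploiting the recursive structure of $\T_M$ together with Seymour's lemma (Theorem~\ref{th:seymour}) and the standard rank additivity $\rank(M) = \rank(A) + \rank(M/A)$ for matroid contractions.

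For the first bullet (well-definedness), I would argue by induction on $|E(M_x)|$, the invariant being that at every vertex $x$ of $\T_M$, the matroid $M_x$ is a connected loopless minor of $M$. This is true at the root by the assumption on $M$. If $x$ is not a leaf, then $M_x$ has positive rank after contracting $C_x$ and removing loops, so it has at least two elements and therefore carries a circuit; hence a maximum circuit $C_x$ exists (the choice is arbitrary but one always exists). Each child of $x$ is by construction a connected component of the loopless part of $M_x/C_x$, hence both connected and loopless, closing the induction. The recursion terminates because $|E(M_x)|$ strictly decreases from parent to child (we contract $|C_x|\geq 2$ elements and then delete a further set of loops). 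Well-definedness of $\T_M$ does not pin down the tree uniquely, but only asserts that any choice of a maximum circuit at each node produces a valid tree.

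For the second bullet, I would apply Theorem~\ref{th:seymour} at each parent $y$: it states that every circuit of $M_y/C_y$ has size strictly less than $|C_y|$. Since $M_x$ is the restriction of $M_y/C_y$ to one of its non-loop connected components, every circuit of $M_x$ is a circuit of $M_y/C_y$, so in particular $|C_x|<|C_y|$. Iterating along a root-to-leaf path of length $D$ yields a strictly decreasing sequence of integers $c(M)=|C_{\text{root}}|>|C_{x_1}|>\cdots>|C_{x_D}|\geq 2$, where the lower bound $2$ uses looplessness; hence $D\leq c(M)-2 < c(M)-1$.

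For the third bullet I would induct on $\T_M$, using rank additivity for contractions. At a leaf $x$, the condition $\rank(M_x/C_x)=0$ forces $\cl_{M_x}(C_x)=E(M_x)$, so $\rank(M_x)=\rank(C_x)=|C_x|-1$, matching the sum over the one-vertex subtree. For the inductive step, let $N_1,\ldots,N_k$ be the non-loop connected components of $M_x/C_x$; since loops have rank $0$ and a direct sum into connected components is rank-additive,
\[
\rank(M_x) \;=\; \rank(C_x)+\rank(M_x/C_x) \;=\; (|C_x|-1)+\sum_{i=1}^k \rank(N_i).
\]
Applying the inductive hypothesis to each subtree $\T_{N_i}$ and summing gives $\rank(M_x)=\sum_{z\in\T_{M_x}}(|C_z|-1)$, which at the root is the desired identity.

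The main delicate point I anticipate is the edge case in which contracting $C_x$ and deleting loops leaves a connected component consisting of a single non-loop element; such a component has no circuit in the usual sense, but contributes $1$ to the rank. The cleanest fix is to adopt the convention that such a singleton is a leaf with $|C_x|=2$ (equivalently, count it as contributing rank $1$), which preserves both the strictly-decreasing circuit sizes along the path and the rank identity in the inductive step. Once this convention is adopted, the three inductions above go through without further difficulty.
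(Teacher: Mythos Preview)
Your treatment of the second and third bullets---Seymour's lemma for the strict decrease $|C_y|>|C_x|$, and bottom-up induction using $\rank(M_x)=\rank(C_x)+\rank(M_x/C_x)$ together with rank-additivity over connected components---is exactly what the paper does. Your leaf computation $\rank(M_x)=|C_x|-1$ is in fact cleaner than the paper's phrasing.

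The gap is in the first bullet and in the ``delicate point'' you raise at the end. Your argument that $M_x$ carries a circuit is circular: you write ``if $x$ is not a leaf, then $M_x$ has positive rank after contracting $C_x$'', but whether $x$ is a leaf is \emph{defined} via $C_x$, whose existence is precisely what is in question. What actually has to be shown is that every child component $M_x$ satisfies $|E(M_x)|\geq 2$, so that connectedness plus looplessness forces a circuit. Your proposed fix---declaring a singleton non-loop component to be a leaf with the convention $|C_x|=2$---does not work: if the parent happens to have $|C_y|=2$ (two-element circuits are permitted in a loopless matroid), the strict inequality $|C_y|>|C_x|$ fails, and with it the depth bound.

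The paper does not patch the singleton case; it proves it never occurs. The cleanest way to see this is via coloops: since $M_y$ is connected with $|E(M_y)|>1$, it has no coloops; and for $e\notin A$ one has that $e$ is a coloop of $M/A$ iff $e$ is a coloop of $M$ (because $\cl_{M/A}(X)=\cl_M(X\cup A)\setminus A$). Hence $M_y/C_y$ has no coloops, so every non-loop element lies in some circuit of size $\geq 2$, and its connected component therefore has at least two elements. With this in hand, your inductions for the remaining two bullets go through with no convention needed.
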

\begin{proof}
Let us first observe that $rank(M_x) \geq 1$ at each $x \in \T_m$. This is
true for the root, by assumption, and since we remove loops after
every contraction step, either the result is an empty matroid, or it
is of rank at least $1$.  Next, let us verify that $\T_M$ is well
defined - namely, that for any vertex generated in the process
defining $\T_M$, we can proceed by  reducing $M_x$, unless $x$ is a
leaf. The only problem that
may occur is when  $M_x$ does not contain a circuit.
Assume by contradiction that such $M_x$'s exist. Let $x$ be the highest such vertex in $\T_M$.
Recall that in a connected matroid every element $e$ is contained in a circuit, unless $e$ is a sole element there.
Since $M_x$ is connected, loopless and nonempty, it must be of the form $E(M_x)=\{e\}$, where
$e$ is not a loop. By our assumptions $|E(M)|>1$, and therefore $x$ is not a root; let $y$ be its parent in $\T_M$.

By the choice of $x$, $|E(M_y)| > 1$, and since $M_y$ is loopless and connected, there is a circuit $C$ in $M_y$
containing $e$ and some other elements. Contraction $C_y$ in $M_y$,
the set $C \setminus C_y$ splits into disjoint circuits. Since $e$ is
not a loop in $M_x$, $e$ is contained in some circuit $C' \subseteq C$
in $M_y/C_y$ with $|C'| > 1$. Hence its connected component contains
at least one additional element besides itself contradicting the fact
that $|E(M_x)|=1$.

The second statement is an immediate consequence of Theorem \ref{th:seymour}.

The third statement can be shown by bottom-up induction on the
structure of $\T_m$. Observe that the subtree of $\T_M$ rooted at the
vertex $x \in V(\T_M)$ is
$\T_{M_x}$.
  Hence it is enough to show this for the leaves of $\T_m$, and then for any node $x$ in $\T_M$ assuming the
statement holds for $x$'s descendants.

 When for a leaf $x$, by definition $|C_x|=1$ while $rank(M_x)=0 =
 |C_x|-1$.  Consider now a vertex
$y \in \T_M$ with  children $\{x_i\}_1^\ell$. Since loops removal has
no effect on the rank, using the fact that a non-simple matroid has
rank that is the sum of ranks of its components, we get
\[
\rank(M_y) ~=~ \rank(C_y) \;+\; \rank(M_y/C_y) ~=~ (|C_y| - 1) \;+\;  \sum_i \rank(M_{x_i})
\]
\vspace{-0.5cm} \\ $\mbox{}$
\hspace{1.8cm}
$~=~ (|C_y| - 1) \;+\;
\sum_{z\in \T_{M_{x_i}}} (|C_z| - 1)
=~ \sum_{z\in \T_{M_y}} (|C_z| - 1)\,.
$
\end{proof}
In order to relate $c(M)$ to $\gamma(M)$, we introduce the following matroid-theoretic function $s_M(i)$ of $M$:
\begin{definition}
For $i \geq 0$, let ~~$s_M(i)=\max\{\, |K|: K \subset E(M), \;\rank(K) \leq i \}$\,.
\end{definition}
The following theorem is one of the central results of this paper:
\begin{theorem}
\label{t:covmat}
Let $M$ be a loopless matroid with $c(M)=k>1$. Then,
\[
\gamma(M) ~\leq~ s_M\left({{(k-1)k} \over 2} \right)\,.
\]
\end{theorem}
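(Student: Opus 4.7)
The plan is to proceed by induction on $k$, proving the slightly more flexible statement that for any loopless matroid $M$ with $c(M) \leq k$, one has $\gamma(M) \leq s_M(\binom{k}{2})$. The base case $k = 2$ is routine: such an $M$ is a disjoint union of parallel classes, and both $\gamma(M)$ and $s_M(1) = s_M(\binom{2}{2})$ equal the size of the largest parallel class.

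For the inductive step, I would first invoke the remarks preceding the theorem to replace $M$ by the restriction $M|_{A^*}$, where $A^*$ achieves the maximum in $\gamma(M)$, and, noting that $\gamma(M)$ is preserved by passing to a connected component of $M|_{A^*}$, reduce WLOG to the case that $M$ is connected, loopless, satisfies $c(M) \leq k$, and has $\gamma(M) = |E(M)|/\rank(M)$. If $c(M) < k$ the induction hypothesis finishes the job, so the interesting case is $c(M) = k$. I then pick a maximum circuit $C$ with $|C| = k$ and set $D = \cl_M(C)$, so $\rank_M(D) = k-1$ and $|D| \leq s_M(k-1)$. By Theorem~\ref{th:seymour}, the non-loop connected components $N_1, \dots, N_\ell$ of $M/C$ each satisfy $c(N_j) < k$, so the induction hypothesis yields $\gamma(N_j) \leq s_{N_j}(\binom{k-1}{2})$.

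The linchpin of the argument is the transfer estimate
\[
s_{N_j}(i) \;\leq\; s_M(i + k - 1) \;-\; |D|, \qquad i \geq 0,
\]
which holds because any $B \subseteq E(N_j)$ with $\rank_{N_j}(B) \leq i$ satisfies $\rank_M(B \cup D) = \rank_M(B \cup C) \leq i + (k-1)$, while $B$ and $D$ are disjoint subsets of $E(M)$. The arithmetic identity $\binom{k-1}{2} + (k-1) = \binom{k}{2}$ now matches exactly what is needed: taking $i = \binom{k-1}{2}$ gives $\gamma(N_j) \leq s_M(\binom{k}{2}) - |D|$, hence $|E(N_j)| \leq \rank(N_j)\bigl(s_M(\binom{k}{2}) - |D|\bigr)$. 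Writing $R = \rank(M)-(k-1) = \sum_j \rank(N_j)$, the identity $|E(M)| = |D| + \sum_j |E(N_j)|$ together with $\rank(M) = (k-1) + R$ reduces the goal $\gamma(M) \leq s_M(\binom{k}{2})$, after clearing denominators, to $(k-1)\, s_M(\binom{k}{2}) + (R-1)|D| \geq 0$. This is immediate when $R \geq 1$; and when $R = 0$ we have $E(M) = D$, so the bound follows from $|D| \leq s_M(k-1) \leq s_M(\binom{k}{2})$.

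I expect the main obstacle to be isolating the transfer estimate in the correct form; once one sees that the identity $\binom{k-1}{2} + (k-1) = \binom{k}{2}$ is the arithmetic fact that makes the induction close, the remainder is bookkeeping.
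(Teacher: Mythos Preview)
Your proof is correct. The underlying engine is the same as the paper's: contract a maximum circuit, invoke Seymour's lemma (Theorem~\ref{th:seymour}) to force the circuit size to drop strictly in each non-loop component of the contraction, and use the arithmetic $(k-1)+(k-2)+\cdots+1=\binom{k}{2}$ to close the argument. The difference is packaging. The paper unrolls the recursion into an explicit decomposition tree $\T_N$ (Definition~\ref{def:T}) and then bounds $|E(N)|$ by covering it with the closures $\cl\bigl(\bigcup_{x\in P_z}C_x\bigr)$ over the leaves $z$, using Claim~\ref{cl:T_M} to control both the rank of each such union and the number of leaves. You instead run a direct induction on $k$, and your transfer estimate $s_{N_j}(i)\le s_M(i+k-1)-|D|$ (valid because $B\subseteq E(N_j)$ is disjoint from $D=\cl_M(C)$ while $\rank_M(B\cup D)=\rank_{M/C}(B)+(k-1)$) does the work that the tree bookkeeping does in the paper. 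Your route is a bit slicker and self-contained, avoiding Claim~\ref{cl:T_M} entirely; on the other hand, the paper's explicit tree $\T_M$ is not wasted effort, since it is reused verbatim in the proof of the sharper simplicial bound Theorem~\ref{t:d=2}.
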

\begin{proof}
Let $N$ be the submatroid of $M$  on which $\gamma(M)$ is achieved.
That is, $|E(N)|=\gamma(M) \cdot\rank(N)$. As we have seen, 
$N$ is w.l.o.g., connected, loopless, with $|E(N)| > 1$.  We shall prove that
\[
|E(N)| ~\leq~ s_N \left({{(r-1)r} \over 2}\right) \cdot \rank(N)\,,
\]
where $r=c(N) \leq c(M) = k$. Observe that $s_N(*)$ is dominated by $s_M(*)$.

Let $\T_N$ be the decomposition tree of $N$, and consider a vertex $x$
of $\T_N$, and its father $y$ in $\T_N$.
Observe that $e \in E(N)$ becomes a loop when defining $N_x$, if and
only if  it is spanned
by \;$\cup_{z \in P_y} C_z$\;, where $P_y$ is the path from $y$ to the root in $\T_N$.
Indeed, one can verify this inductively, recalling that
$e \in E(N)$ becomes a loop in $N/A$,  if and only
if $e\in \cl(A)$, and using the  identity
$(N/A)/B \cong N/(A\cup B)$ along with the fact that $N_x$ is isomorphic to a
connected component of $N / \cup_{z \in P_y} C_y$.

Let $L(\T_N)$ be the set of leaves of $\T_n$, and for $z \in L(\T_n)$
let $P_z$ be the path from $z$ to the root of $\T_N$.
Keeping in mind that all the elements of $N$ get eventually eliminated during the
decomposition process described by $\T_N$, one concludes that
\begin{equation}
\label{eq:union}
\bigcup_{z:\, {\rm leaves~of}~\T_N} \cl\left( \bigcup_{x \in P_z} C_x \right) ~~=~~ E(N)\,,
\end{equation}

Now, for any $z \in L(\T_N)$,
\begin{equation}\label{eq:22}
\rank \left( \cl\left( \bigcup_{x \in P_z} C_x \right)\right) ~~=~~
\rank \left( \bigcup_{x \in P_z} C_x \right)
~=~ \sum_{x \in P_z} |C_x| - 1\,.
\end{equation}

Since $c(N) \leq c(M)= k$, then $|C_x| \leq k$ for every vertex
$x \in \T_n$. In addition, Theorem \ref{th:seymour} implies that the
size of $C_x$  drops down by at least $1$ every time we move
down the tree. Therefore, for any leaf $z \in L(\T_n)$, $\sum_{x \in
  P_z} |C_x| - 1 \leq (r-1) + (r-2) + \ldots + 1 = (r-1)r/2$.\;
Recalling the definition of  $s_M(*)$ this implies that,
\begin{equation}
\label{eq:2}
\Bigg| \cl\left( \bigcup_{x \in P_z} C_x \right) \Bigg|  ~~\leq~~  s_N \left({{(r-1)r} \over 2}\right)     \,.
\end{equation}

In view of Claim~\ref{cl:T_M}, the number of vertices of $\T_N$, and
in particular the number of leaves there, is at most
$\rank(N)$. Combining this  with Equations (\ref{eq:union}) and
(\ref{eq:2}) we conclude that
\[
|E(N)| ~~\leq
\sum_{z:\, {\rm leaves~of}~\T_N} \Bigg|\cl\left( \bigcup_{x \in P_z} C_x
\right)\Bigg| ~~\leq~~ \rank(N) \cdot s_N \left({{(r-1)r} \over 2}\right)\,,
\]
as desired.
\end{proof}
For an application of Theorem \ref{t:covmat},  consider the case when $M$ is a $\F_q$-representable matroid, i.e.,  is isomorphic to a linear matroid over $\F_q$. In this case $s_M(r) \leq q^r$. Then, applying
Theorem~\ref{t:covmat}, we conclude that
\begin{corollary}
\label{cor:q-rep1}
For $M$ as above, $\gamma(M) ~\leq~ q^{{c(M) \choose 2}}$. ~
Consequently,  ~$c(M) \; >~ \sqrt{2  \log_q \gamma(M)}\,$.
\end{corollary}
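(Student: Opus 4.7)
The plan is to combine Theorem~\ref{t:covmat} with a completely elementary count of elements of bounded rank in an $\F_q$-representable matroid, exactly in the way anticipated by the paragraph preceding the corollary. Since the statement is only meaningful when $\gamma(M)$ is finite (hence $M$ is loopless), and the key count really uses that no two elements of $M$ are parallel, I would read the corollary---following the convention of Corollary~\ref{cor:q-rep}---as asserting the bound for \emph{simple} $\F_q$-representable $M$.

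First I would verify the clean inequality $s_M(r) \leq q^r$. Since $M$ is simple and $\F_q$-representable, we may identify $E(M)$ with a set of pairwise non-parallel nonzero vectors in some $\F_q^n$, with matroid rank equal to $\F_q$-linear rank. Any $K \subseteq E(M)$ of rank at most $r$ lies in an $r$-dimensional linear subspace $W \leq \F_q^n$, and $W$ contains only $q^r - 1$ nonzero vectors; distinct elements of $M$ yield distinct vectors of $W$, so $|K| \leq q^r - 1 < q^r$.

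Second, I would plug this into Theorem~\ref{t:covmat} with $k = c(M)$. That theorem gives
\[
\gamma(M) ~\leq~ s_M\!\left(\tfrac{(k-1)k}{2}\right) ~\leq~ q^{(k-1)k/2} ~=~ q^{\binom{k}{2}},
\]
which is the first claim. Taking $\log_q$ yields $\log_q \gamma(M) \leq k(k-1)/2 < k^2/2$, hence $c(M) = k > \sqrt{2\log_q \gamma(M)}$, as required.

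There is essentially no obstacle here---the whole statement is a one-line composition of Theorem~\ref{t:covmat} with the subspace-counting bound $s_M(r)\leq q^r$. The only point demanding any care is the (implicit) simplicity assumption on $M$: without it, a matroid with many parallel copies of a single vector would make $s_M(1)$ arbitrarily large and destroy the bound, so I would state simplicity explicitly in the hypothesis rather than leaving it to the reader.
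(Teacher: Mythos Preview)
Your proof is correct and mirrors the paper's argument exactly: the paper also derives the corollary as the one-line composition of Theorem~\ref{t:covmat} with the bound $s_M(r)\le q^r$ for $\F_q$-representable $M$. Your observation that the count $s_M(r)\le q^r$ genuinely requires simplicity (otherwise parallel classes make $s_M(1)$ unbounded) is well taken; the paper leaves this implicit in the phrase ``as above,'' so making it explicit is an improvement rather than a deviation.
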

This is a considerable improvement over Cor.~\ref{cor:q-rep}. For a simple exponential
lower bound, let $M$ be a $k$-dimensional linear space over $\F_q$ with $0$ removed.
For this $M$,~ $\gamma(M) = (q^k - 1)/k$, while $c(M) = k+1$.
\section{Results: Simplicial Complexes}
The general results obtained in the previous section apply to simplicial complexes.
Let us first relate the matroidal notation used in the previous section to the usual notation used for
simplicial complexes.

Let $K$ be a pure $d$-dimensional simplicial complex.
The $f$-vector of $K$ is $(f_d,f_{d-1},\ldots,f_0)$ where $f_i =  |K^{(i)}|$.
The rank function $\rank_d$\; that introduces a matroidal structure on $K^{(d)}$ is
$$\rank_d(A) ~=~ \rank \left\{ \partial \sigma \;|\;  \sigma \in A \subseteq K^{(d)} \right\}.$$
This results in an $\F$-representable\footnote{To be more explicit,
each $\sigma \in K^{(d)}$ is associated with the (signed) incidence vector of
$\partial \sigma$ with respect to $K^{(d-1)}$, and the $\rank$ is the usual linear-algebraic
rank function of sets vectors in $\F^{f_{d-1}}$.}   matroid $M(K^{(d)})$.
In particular, $\mathcal{I}$, the independent sets in $M(K^{(d)})$, are precisely
the acyclic sets of $d$-simplices in $K$. Observe also that  $\rank_d(M(K^{(d)})) =
\dim B_{d-1}$, where $B_{d-1}$ is the space of the $(d-1)$-boundaries of $K$.

We shall use $c_d(K)$ and $\gamma_d(K)$  to denote the size of the largest circuit, and the
value of the parameter $\gamma$ in the above matroid $M(K^{(d)})$,
respectively. Note that $c_d(K)$ coincides with the size of the largest simple $d$-cycle in
$K$, as defined in Section \ref{sec:defs}. Slightly abusing the notation, we shall use $\rank_d(K)$ to denote
$\rank_d(M(K^{(d)}))$.
\\ $\mbox{}$

In order to employ Theorem~\ref{t:covmat}, we shall estimate $s_d(t)$, the maximum possible size of a family $A$ of $d$-complexes with $\rank_d(A) \leq t$. The results of~\cite{BK} implicitly imply the following:
\begin{theorem} {\bf [{implicit in}~\cite{BK}\footnote{This 
was also independently shown by Nati Linial and Yuval Peled, private communication, by a direct argument involving the shifting technique.
A similar, marginally weaker (yet sufficient for our needs here)  but  rather straight-forward  conclusion follows directly from the Kruskal-Katona Theorem, which provides a tight lower bound on $f_{d-1}$ in terms of $f_d$. One only needs to add an observation that ~~$\rank_d(K) \geq {1 \over {d+1}} f_{d-1}(K)$.}
\hspace{-0.14cm} ]}%
\label{th:KK}
$\mbox{}$~ Among all families $B$, $|B|=s$, of $d$-complexes, the
minimum rank is achieved by the family that is
{\em compressed } with respect to the co-lex order (that is, as in the Kruskal-Katona Theorem) $B_0$. The basis is formed by the $d$-simplices in $B_0$ containing the vertex with the smallest label. Using the relaxed
formulation of Kruskal-Katona Theorem by Lovasz~\cite{Lovasz07}, for such $B_0$ it holds that
\begin{equation}
\label{eq:LNPR}
 s ~=~ {{x+1} \choose {d+1}} ~~~~~~~~~~~~\implies
~~~~~~~~~~~~ \rank_d(B_0) ~\geq~ {{x} \choose  d}
\end{equation}

\end{theorem}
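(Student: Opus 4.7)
The statement bundles two assertions that I would tackle separately: (a) among all families $B$ of size $s$, the co-lex initial segment $B_0$ minimizes $\rank_d$; and (b) for this $B_0$, when $s = \binom{x+1}{d+1}$, one has $\rank_d(B_0) \geq \binom{x}{d}$, with the bound witnessed by the $d$-simplices of $B_0$ containing the smallest vertex. I would attack (b) first by direct construction, then (a) by a compression argument.

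For (b), take $x$ to be an integer (the real-$x$ case then follows from Lov\'asz's relaxed form). The initial segment of size $\binom{x+1}{d+1}$ in the co-lex order is $B_0 = \binom{[x+1]}{d+1}$, where $[x+1]:=\{1,\ldots,x+1\}$. Set $B_0^1 := \{\sigma \in B_0 : 1 \in \sigma\}$, of size $\binom{x}{d}$. I claim that $B_0^1$ is a basis of $M(B_0^{(d)})$. Independence of $\{\partial_d \sigma : \sigma \in B_0^1\}$ is transparent: each $\sigma = \{1\}\cup T$, with $T \subset \{2,\ldots,x+1\}$, $|T|=d$, has $\pm[T]$ as a boundary term; $T$ does not contain $1$, and distinct $\sigma \in B_0^1$ produce distinct such $T$. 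Hence the boundary matrix, restricted to columns indexed by the $(d-1)$-faces of $[x+1]\setminus\{1\}$, contains a permutation-matrix-like square submatrix, forcing independence. Spanning follows from $\partial \partial = 0$: for $\sigma \in B_0 \setminus B_0^1$, set $\tau := \{1\}\cup\sigma \in \binom{[x+1]}{d+2}$. The identity $\partial_d \partial_{d+1} \tau = 0$ is a linear relation among the boundaries of the $d+2$ many $d$-faces of $\tau$, and solving it for $\partial_d \sigma$ expresses it as a $\pm 1$-combination of $\partial_d \sigma'$ for $\sigma'$ ranging over the $d+1$ $d$-faces of $\tau$ that contain $1$, all of which lie in $B_0^1$.

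For (a), the plan is a Kruskal--Katona-style compression. For each $i<j$, define the shift $S_{ij}(B)$ that replaces every $\sigma \in B$ with $j \in \sigma$, $i \notin \sigma$ by $(\sigma\setminus\{j\})\cup\{i\}$ whenever the latter is not already in $B$. A finite sequence of such shifts drives an arbitrary $B$ with $|B|=s$ into the co-lex initial segment $B_0$, and the key lemma to prove is that each shift weakly decreases $\rank_d$. This rank-monotonicity under shifting is the main obstacle and is the content of the argument implicit in~\cite{BK}. A cleaner shortcut---sufficient when a dimension-dependent constant-factor loss is acceptable---bypasses shifting entirely: any independent $I \subseteq B$ must satisfy $\mathrm{shadow}(I) = \mathrm{shadow}(B)$ (otherwise some $\partial_d \sigma$, $\sigma \in B\setminus I$, would lie outside $\mathrm{span}\{\partial_d \tau : \tau \in I\}$), while $|\mathrm{shadow}(I)| \leq (d+1)|I|$; together these give $\rank_d(B) \geq |\partial B|/(d+1)$, and combining with the standard Kruskal--Katona lower bound on $|\partial B|$ yields a version of the statement already strong enough for the application to $s_d(t)$ in Section~\ref{sec:main}.
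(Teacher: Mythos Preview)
The paper does not actually supply a proof of this theorem: it cites \cite{BK} for the full statement and, in the footnote, sketches the weaker route via Kruskal--Katona together with the inequality $\rank_d(K)\ge f_{d-1}(K)/(d+1)$. Your proposal is consistent with both of these, and largely correct.

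Your treatment of (b) is clean and complete for integer $x$. Note that the spanning argument in fact works for \emph{any} colex initial segment $B_0$, not just $\binom{[x+1]}{d+1}$: if $\sigma\in B_0$ and $1\notin\sigma$, then for each $v\in\sigma$ the set $\{1\}\cup(\sigma\setminus\{v\})$ precedes $\sigma$ in colex (their symmetric difference is $\{1,v\}$ with maximum $v\in\sigma$), hence lies in $B_0^1$; the identity $\partial_d\partial_{d+1}(\{1\}\cup\sigma)=0$ then does the job. This yields the paper's assertion that the basis of $B_0$ is always $B_0^1$, for arbitrary $s$.

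For (a) you correctly identify the shifting/compression approach and correctly flag rank-monotonicity under $S_{ij}$ as the crux; like the paper, you defer this to \cite{BK}. Your ``shortcut'' is exactly the footnote's argument, with one slip: it is not true that \emph{any} independent $I\subseteq B$ has $\mathrm{shadow}(I)=\mathrm{shadow}(B)$ (take $I=\emptyset$). You need $I$ to be a \emph{basis}, so that every $\partial_d\sigma$ with $\sigma\in B$ lies in the span of $\{\partial_d\tau:\tau\in I\}$; then indeed each $(d-1)$-face of $\sigma$ must occur in some $\tau\in I$, giving $\mathrm{shadow}(B)\subseteq\mathrm{shadow}(I)$ and hence $\rank_d(B)\ge f_{d-1}(B)/(d+1)$. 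With that fix the shortcut is sound and matches the paper's footnote verbatim.
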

Consequently, there exists a constant $a_d$  such that for every
$d$-simplicial complex,
\begin{equation}
\label{eq:s(t)}
s_d(t) ~~\geq~~ a_d\cdot t^{{d+1} \over d}\,.
\end{equation}
Combining Theorem~\ref{th:KK} with Theorem~\ref{t:covmat}, we get
\begin{theorem}
\label{t:covcomp}
There is a universal constant $b_d$, such that for any $d$-dimensional
simplicial complex $K$ containing nontrivial $d$-cycles, it holds that
\[
f_d(K) ~>~ b_d \cdot k^{2+{2\over d}} \cdot \rank_d(K)~~~ \implies ~~~\mbox{ $K$ contains a
simple $d$-cycle of size $> k$}\,.
\]
\end{theorem}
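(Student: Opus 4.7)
The plan is to apply Theorem~\ref{t:covmat} contrapositively to the simplectic matroid $M(K^{(d)})$, and then convert the resulting bound on $\gamma_d$ into a bound on $f_d$ using Kruskal--Katona via Theorem~\ref{th:KK}. Concretely, suppose $c_d(K)\leq k$; I want to deduce $f_d(K) \leq b_d\cdot k^{2+2/d}\cdot \rank_d(K)$, which is exactly the contrapositive of the claim. Applying Theorem~\ref{t:covmat} to $M(K^{(d)})$ yields
\[
\gamma_d(K) ~\leq~ s_d\!\left(\tfrac{(k-1)k}{2}\right),
\]
while taking $A = K^{(d)}$ itself in the definition of $\gamma$ gives the trivial lower bound
\[
\gamma_d(K) ~\geq~ \frac{f_d(K)}{\rank_d(K)}.
\]

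Next I would use Theorem~\ref{th:KK} in its contrapositive form to upper-bound $s_d(t)$. The theorem says that among all $d$-families $B$ of a given size $s=\binom{x+1}{d+1}$, the rank is minimized by the co-lex compressed family, and for that family $\rank_d(B)\geq\binom{x}{d}$. Inverting: if $\rank_d(A)\leq t$, choose $x$ with $\binom{x}{d}\leq t<\binom{x+1}{d}$; then $|A|\leq\binom{x+1}{d+1}$. Using $\binom{x}{d}\asymp x^d/d!$ and $\binom{x+1}{d+1}\asymp x^{d+1}/(d+1)!$, one obtains a constant $a_d$ with $s_d(t)\leq a_d\cdot t^{(d+1)/d}$ (this is the content of equation~(\ref{eq:s(t)}), read as the needed upper bound rather than a lower bound). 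Plugging $t=(k-1)k/2\leq k^2/2$ and combining with the two displayed inequalities above,
\[
f_d(K) ~\leq~ \rank_d(K)\cdot s_d\!\left(\tfrac{(k-1)k}{2}\right) ~\leq~ a_d\cdot \left(\tfrac{k^2}{2}\right)^{(d+1)/d}\cdot \rank_d(K) ~\leq~ b_d\cdot k^{2+2/d}\cdot \rank_d(K),
\]
for an appropriate absolute constant $b_d$ depending only on $d$. The contrapositive of this implication is exactly Theorem~\ref{t:covcomp}.

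The only step requiring real care is the conversion of Theorem~\ref{th:KK} from a rank lower bound into the $s_d(t)=O(t^{(d+1)/d})$ upper bound, since the exponent $(d+1)/d$ is precisely what produces the $k^{2+2/d}$ in the conclusion; everything else is a one-line chaining of Theorem~\ref{t:covmat} with the trivial estimate $\gamma_d(K)\geq f_d(K)/\rank_d(K)$, followed by absorbing the $(1/2)^{(d+1)/d}$ and similar lower-order factors into the constant $b_d$.
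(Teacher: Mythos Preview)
Your proof is correct and matches the paper's approach exactly: the paper simply states that Theorem~\ref{t:covcomp} follows by ``combining Theorem~\ref{th:KK} with Theorem~\ref{t:covmat}'', and you have carefully spelled out this combination, including the trivial estimate $\gamma_d(K)\geq f_d(K)/\rank_d(K)$ and the substitution $t=(k-1)k/2$. You are also right that the inequality in~(\ref{eq:s(t)}) is stated with the wrong direction in the paper and must be read as the upper bound $s_d(t)\leq a_d\cdot t^{(d+1)/d}$, which is indeed what Theorem~\ref{th:KK} yields upon inversion.
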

This theorem can be strengthened.
%
\begin{theorem}
\label{t:d=2}
Let $K$ be a simplicial $d$-complex containing nontrivial $d$-cycles. Then,
\[
f_d(K) ~>~ {{d+1} \over 2}\cdot k(k+1) \cdot \rank_d(K)~~~ \implies ~~~\mbox{ $K$ contains a
simple $d$-cycle of size $> k$}.
\]
Consequently, $c_d(K) \geq \sqrt{\gamma_d(K) /d} \;-1$.
\end{theorem}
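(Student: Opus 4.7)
The plan is to apply the matroid tree decomposition of Theorem~\ref{t:covmat}, but to analyze the per-vertex closure terms via two simplicial-specific ingredients: the inequality $\rank_d(K') \geq f_{d-1}(K')/(d+1)$ for any pure $d$-subcomplex (the footnote to Theorem~\ref{th:KK}), and the observation that for any simple $d$-cycle $C$, every $(d-1)$-face appearing in the boundary of some element of $\supp(C)$ must appear in the boundary of at least two such elements in order to cancel, so that the number of such $(d-1)$-faces is at most $(d+1)|C|/2$.

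As in Theorem~\ref{t:covmat}, let $N$ be the subcomplex on which $\gamma_d(K)$ is achieved and build the tree $\T_N$ with maximum simple cycle $C_x$ at each vertex $x$. Every $d$-simplex of $N$ is eliminated at a unique vertex of $\T_N$---either as an element of $C_x$ when $C_x$ is contracted, or as a loop in $M_x/C_x$---yielding
\[
f_d(N) \;=\; \sum_{x \in \T_N} |\cl_{M_x}(C_x)|.
\]

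The key step is the per-vertex bound $|\cl_{M_x}(C_x)| \leq \tfrac{d+1}{2}|C_x|(|C_x|+1)$. Since $M_x$ is a minor and is generally not itself simplectic, neither Theorem~\ref{th:KK} nor the footnote inequality applies to $M_x$ directly, so one must lift to the ambient simplectic matroid $M$. Setting $A_x = \bigcup_{y\in P_x,\,y\neq x} C_y$, the closure $\cl_{M_x}(C_x)$ injects into $\cl_M(A_x \cup C_x) \setminus \cl_M(A_x)$. Because $C_x$ is a circuit in $M/A_x$, every $(d-1)$-face appearing in the boundary of some $\sigma\in C_x$ but not in the boundary of any $\tau\in A_x$ must lie in the boundary of at least two elements of $C_x$ (else its coefficient could not cancel when projected to the quotient by the span of $\{\partial\tau:\tau\in A_x\}$), so $C_x$ contributes at most $(d+1)|C_x|/2$ new $(d-1)$-faces over $A_x$. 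Applying the footnote inequality to the pure $d$-subcomplex generated by $\cl_{M_x}(C_x)$, whose quotient rank is $|C_x|-1$, delivers the claimed bound.

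To conclude, Claim~\ref{cl:T_M} gives $\sum_x(|C_x|-1) = \rank_d(N)$, and since $|C_x|\geq 2$ this implies $|\T_N|\leq \rank_d(N)$. Using $|C_x|(|C_x|+1)\leq k(k+1)$,
\[
f_d(N) \;\leq\; |\T_N|\cdot \tfrac{d+1}{2}k(k+1) \;\leq\; \tfrac{d+1}{2}k(k+1)\rank_d(N),
\]
and taking $N$ to achieve $\gamma_d(K)$ yields the first implication. The consequence $c_d(K) \geq \sqrt{\gamma_d(K)/d} - 1$ then follows from $\tfrac{d+1}{2}c_d(K)(c_d(K)+1) \leq d(c_d(K)+1)^2$ for $d\geq 1$.

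The main obstacle is establishing the per-vertex bound: one must transfer closure information from the non-simplectic minor $M_x$ back to a simplicial statement about $K$ while controlling how the cycle $C_x$ interacts with the flat $\cl_M(A_x)$. The simple-cycle cancellation bound---at most $(d+1)|C_x|/2$ new $(d-1)$-faces modulo those coming from $A_x$---combined with the footnote inequality applied to an appropriate ``quotient'' subcomplex is what turns an otherwise Kruskal--Katona-shaped estimate into a clean polynomial bound in $|C_x|$.
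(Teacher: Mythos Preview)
Your overall strategy---the tree decomposition $\T_N$, the identity $f_d(N)=\sum_x|\cl_{M_x}(C_x)|$, and the summation via Claim~\ref{cl:T_M}---matches the paper's. The gap is the per-vertex bound $|\cl_{M_x}(C_x)|\le\tfrac{d+1}{2}\,|C_x|(|C_x|+1)$, which your argument does not establish. The footnote inequality $\rank_d(K')\ge f_{d-1}(K')/(d+1)$ concerns the honest rank in the simplectic matroid, not a quotient rank, and in any case it upper-bounds $f_{d-1}(K')$, not $f_d(K')$; nothing in your sketch converts the (correct) observation ``$C_x$ contributes at most $(d+1)|C_x|/2$ new $(d-1)$-faces over $A_x$'' into an upper bound on the number of $d$-simplices in $\cl_{M_x}(C_x)$. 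Concretely, any $\sigma\in\cl_{M_x}(C_x)$ has all its $(d-1)$-faces inside $(A_x\cup C_x)^{(d-1)}$ and at least one of them among the new ones, but the one remaining vertex of $\sigma$ may be \emph{any} vertex of $A_x\cup C_x$, not only of $C_x$. Hence no function of $|C_x|$ alone can control $|\cl_{M_x}(C_x)|$.

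The paper's proof supplies exactly this missing piece. It bounds the increment at $x$ by $|C_x^{(d-1)}|\cdot|(A_x\cup C_x)^{(0)}|$, uses the same cycle-cancellation bound $|C_x^{(d-1)}|\le\tfrac{d+1}{2}|C_x|$ that you identified, and then controls the second factor over the \emph{whole path}: since every $d$-simplex of $A_x\cup C_x$ lies in some $d$-cycle, every vertex meets at least $d+1$ $d$-simplices, giving $|(A_x\cup C_x)^{(0)}|\le|A_x\cup C_x|\le k+(k-1)+\cdots+1=k(k+1)/2$. Summing $\tfrac{d+1}{4}k(k+1)|C_x|$ over $x$ and using $|C_x|\le 2(|C_x|-1)$ together with Claim~\ref{cl:T_M} yields the theorem. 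The footnote inequality plays no role here; your plan needs this vertex-count argument along $P_x$ in place of the appeal to it.
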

\begin{proof}
We shall use the tree $\T_K = \T_{M(K)_d}$ as in
Definition~\ref{def:T}, with the notation introduced
in the proof of Theorem~\ref{t:covmat}. It will also be assumed w.l.o.g., that $K$ is pure,
and that the corresponding matroid is connected.

We use the following notations.
The {\em upper shadow} $\Delta^{d-1} (X) \subseteq K^{(d)}$ of $X \subseteq K^{(d-1)}$
is the set of all $d$-simplices $\sigma \in K$ so that $\sigma^{(d-1)} \subseteq X$.
The {\em lower shadow} $\Delta_d (A) \subseteq K^{(d-1)}$ of $A \subseteq K^{(d)}$
is $\Delta_d (A) = \{\sigma^{(d-1)} ~ |~ \sigma \in A \}$.
Finally, for $A\subseteq K^{(d)}$ as above, define
\[
\cli(A) ~\stackrel{def}{=}~ \Delta^{d-1}(\Delta_d (A)) ~=~ \{\,\mbox{The $d$-simplices $\sigma \in K^{(d)}$
with $\sigma^{(d-1)} \in \Delta_d(A)$}~\}
\]
Clearly,~ $\cl(A) \subseteq \cli(A)$.

Let $P_v$ be, as before, the path from the vertex $v$ to the root of $\T_K$.
Define
\[
S_v ~=~ \bigcup_{x \in P_v,\, x\neq v} C_x  ~~\subseteq~ K^{(d)}\,.
\]
In particular, for the root vertex $r$, $S_r = \emptyset$.

Observe that
\begin{equation}
\label{eq:size1}
\bigcup_{x\in \T_M} \cl(S_x \cup C_x) \setminus \cl(S_x) ~~=~ K^{(d)}\,;
\end{equation}
\begin{equation}
\label{eq:size2}
\sum_{x\in \T_M} |\cli(S_x \cup C_x)| - |\cli(S_x)| ~~\geq ~|K^{(d)}|~~=~~ f_d(K)\,.
\end{equation}
Indeed, every $\sigma \in K^{(d)}$ either belong to some $C_x$, or it
is removed as a self loop after contracting $C_x$, for $x \in \T_K$.
Denote by $x(\sigma)$ the vertex $x$ for which the above happen for
$\sigma$ (note that $x(\sigma)$ is unique and well defined as $\sigma$
is removed after processing $x$ in $\T_K$). Since for every
$\sigma \in K^{(d)}$,
$\sigma \in \cl(S_x \cup C_x) \setminus \cl(S_x)$ for $x = x(\sigma)$,
(\ref{eq:size1}) follows.

Fix $\sigma \in K^{(d)}$ and $x =
x(\sigma)$. Since $\sigma \in \cl(S_x \cup
C_x)$ it follows that $\sigma \in \cli(S_x \cup
C_x)$. Hence $\sigma^{(d-1)} \subseteq  (S_x \cup
C_x)^{(d-1)}$.  Let $y \in P_{x(\sigma)}$ be the top-most vertex for
which $\sigma^{(d-1)} \subseteq  (S_y \cup
C_y)^{(d-1)}$. Note that for this $y$, $\sigma$ contributes $1$
to $|\cli(S_y \cup C_y)|$ and $0$ to $|\cli(S_y)|$ hence (\ref{eq:size2}) follows.

Next, we upper-bound the term $|\cli(S_x \cup C_x)| - |\cli(C_x)|$ as follows.
Note that if $\sigma \in \cli(S_x
\cup C_x) \setminus \cli(S_x)$ then there must be $\tau \in
\sigma^{(d-1)} \setminus S_x^{(d-1)}$. This implies that such $\tau$
must be in $C_x^{(d-1)}$.  Since   all the vertices of $\sigma$
belong to $(S_x \cup C_x)^{(0)}$, it follows that
\begin{equation}
\label{eq:est}
|\cli(S_x \cup C_x)| - |\cli(C_x)| ~\leq~ |C_x^{(d-1)}| \cdot |(S_x \cup C_x)^{(0)}|\,.
\end{equation}
 Since $C_x$ is a $d$-cycle, every $(d-1)$-face in it is adjacent to
two or more $d$-faces of $C_x$, while every $d$-face is adjacent to $(d+1)$ $d$-faces.
Thus, \;$|C_x^{(d-1)}| \leq {{d+1} \over 2} \cdot|C_x|$.

Consider now $|(S_x \cup C_x)^{(0)}|$. Since $(S_x \cup C_x)^{(0)}$ is constructed by starting with a $d$-cycle,
and repeatedly attaching to it $d$-ears, to borrow the terminology from the graph theory, any $d$-simplex in it is contained in a $d$-cycle. Therefore, by the basic property of $d$-cycles, every vertex in it is adjacent to at least $d+1$ $d$-simplices. Since every $d$-simplex is adjacent to $d+1$ vertices, one concludes that~ $|(S_x \cup C_x)^{(0)}| \leq |S_x \cup C_x)|$. Furthermore, since $S_x \cup C_x$ is the union of the cycles $C_y$, where $y$
appears on the path from the root of $\T_K$ to $x$, by Claim~\ref{cl:T_M},~
$|S_x \cup C_x)| \,\leq\, k+ (k-1) + \ldots + 1 \,=\, k(k+1)/2$.

To sum up,
\begin{equation}
\label{eq:est1}
|\cli(S_x \cup C_x)| - |\cli(S_x)| ~~\leq~~ {{d+1} \over 2} \cdot|C_x|\cdot |(S_x \cup C_x)|
~~\leq~~{{d+1} \over 4} \cdot (k+1)k \cdot |C_x|\,.
\end{equation}
Combining  (\ref{eq:size2}), (\ref{eq:est1}),
and using Claim~\ref{cl:T_M}, one arrives at
\[
f_d(K) ~~\leq~~ \sum_{x\in \T_M} |\cli(S_x \cup C_x)| - |\cli(S_x)| ~~\leq~~
\sum_{x\in \T_M} {{d+1} \over {4}} \cdot k(k+1)  \cdot |C_x|
\]
\[
~~\leq~~ {{d+1} \over {2}} \cdot k(k+1) \cdot \sum_{x\in \T_M} (|C_x|-1) ~~\leq~~  {{d+1} \over {2}} \cdot k(k+1) \cdot\rank_d(K)\,.
\]
\end{proof}
Let now $K$ be a pure simplicial $d$-complex with $c_d(K) > 0$. Since
$f_{d-1} \geq \rank_d(K)$, Theorem \ref{t:d=2} implies the following lower bound on $c_d(K)$ in terms of its density:
\begin{theorem}
\label{t:density}
For $K$ as above,
\[
c_d(K) ~\geq~  \sqrt{{2 \over {d+1}} \cdot { {f_d(K)} \over
    {f_{d-1}(K)}}} \;-\;1~.
\]
\end{theorem}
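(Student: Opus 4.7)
The plan is to obtain Theorem~\ref{t:density} as a direct contrapositive of Theorem~\ref{t:d=2}, using only one elementary linear-algebraic inequality, namely $\rank_d(K) \leq f_{d-1}(K)$. All the heavy lifting has already been done in the proof of Theorem~\ref{t:d=2}; what remains is a short algebraic manipulation, and I do not expect any real obstacle here.

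First I would verify the bound $\rank_d(K) \leq f_{d-1}(K)$. Recall from Section~\ref{s:intro} and Section~\ref{sec:defs} that $\rank_d(K) = \dim B_{d-1}(K;\F)$, and that $B_{d-1}(K;\F)$ is a subspace of $C_{d-1}(K;\F)$, whose dimension is exactly $f_{d-1}(K)$. This gives the inequality immediately.

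Next, set $k = c_d(K)$. By definition $K$ contains no simple $d$-cycle of size strictly greater than $k$. Reading Theorem~\ref{t:d=2} contrapositively, this forces
\[
f_d(K) ~\leq~ \frac{d+1}{2} \cdot k(k+1) \cdot \rank_d(K).
\]
Substituting $\rank_d(K) \leq f_{d-1}(K)$ on the right-hand side yields
\[
\frac{f_d(K)}{f_{d-1}(K)} ~\leq~ \frac{d+1}{2} \cdot k(k+1) ~\leq~ \frac{d+1}{2}\cdot (k+1)^2.
\]

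Finally I would solve this for $k$. Dividing by $(d+1)/2$ and taking square roots gives $k+1 \geq \sqrt{\tfrac{2}{d+1}\cdot f_d(K)/f_{d-1}(K)}$, which rearranges to the claimed lower bound on $c_d(K) = k$. The only minor point worth flagging is the hypothesis $c_d(K) > 0$, which ensures $k \geq 1$ so that Theorem~\ref{t:d=2} is applicable in the stated form and the square-root expression is meaningful; otherwise the statement is vacuous.
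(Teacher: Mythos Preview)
Your proposal is correct and follows precisely the approach the paper intends: the paper itself derives Theorem~\ref{t:density} in one line by combining Theorem~\ref{t:d=2} with the observation $f_{d-1}(K) \geq \rank_d(K)$. You have simply spelled out the contrapositive and the short algebra that the paper leaves implicit.
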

$\mbox{}$
\\ \\
{\bf \large Open Problems} The most intriguing open problem raised by this paper is the tightness of
the above lower bounds. Currently the dependence of $c(K)$ in
$\gamma(K)$ can be anything
between the lower bound in Theorem \ref{t:d=2} and the upper bound
achieved by the complete complex $K^{(d)}_n$ in which $c(K) = \theta(\gamma(K)^d)$.

 In addition, our lower bounds yield nothing when $\gamma_d(K) \leq d^3$;
clearly, if $c_d(K) > 1$, then $c_d(K) \geq d+1$. What happens in this range?
$\mbox{}$
\\ \\ \\
{\bf \large Acknowledgements} We are very grateful to Jan van den Heuvel from the London School of Economics for
an insightful discussion which led to the formulation of the main question addressed in this paper.
%
%
%
%
%

\end{document}